\newtheorem{teo}{Theorem}[section]
\newtheorem{lemma}[teo]{Lemma}
\newtheorem{defi}[teo]{Definition}
\newtheorem{rmk}[teo]{Remark}
\newtheorem{es}[teo]{Example}
\newenvironment{proof}{\textit{Proof:}}{\begin{flushright} $\blacksquare$ \end{flushright}}
\newcommand {\fs}{\mathcal{O}}
\newcommand {\p}{\mathbb{P}^1}
\newcommand {\fr}{\rightarrow}
\renewcommand {\cal}{\mathcal}
\newcommand {\sym}{\text{Sym}}
\begin{document}

\title{Fibred surfaces with general pencils of genus 5}
\author{Elisa Tenni}
\date{}
\maketitle
\begin{abstract}\noindent
Let $f\!:S \fr B$ be a surface fibration with fibres of genus 5. We find a linear relation between the fundamental invariants of the surface. Namely $K_f^2=\chi_f+N$ where $N$ is the number of trigonal fibres. Our proof is based on the analysis of the relative canonical algebra $\cal{R}(f)$.
\end{abstract}

\section{Introduction}

Let $f\!: S \fr B$ be a morphism between a smooth projective surface and a smooth projective curve over an algebraically closed field $k$ of characteristic 0. This morphism is always flat (see \cite{Hart}, Proposition III.9.7).
For $P \in B$ write $F_P=f^{*}(P)$ for a fibre of $f$. Let $g=\text{genus}(F) \geq 2$ and $b=$ genus($B$).

Since char$(k)=0$ Ramanujan's Lemma (see \cite{BPV}, Proposition III
11.1) implies that $h^0(\cal{O}_F$) is constant. In particular we can suppose that the surface $S$ is connected and that $h^0(\cal{O}_F)=1$.

A morphism with these properties is called a \textit{surface fibration}. A fibration which has no $-1$-curves in any of its fibres is called \textit{relatively
minimal}. We will always assume that this is the case.\\

Let $\omega_f$ be the relative canonical bundle given by $\omega_S \otimes f^*\omega_B^{-1}$ and $K_f$ the associated divisor. By Arakelov's
theorem (see \cite{Bea}) $\omega_{S|B}$ is nef, that is, the intersection number
$K_f\cdot C$ is non negative for every irreducible curve $C$ on $S$. Furthermore
it is known that $K_f\cdot C= 0$ if and only if $C$ is a $-2$-curve contained
in a fiber.

The fundamental invariants associated to $S$ are

\begin{enumerate}
\item the self-intersection $K_f^2$ of the relative canonical divisor,
\item the relative Euler characteristic $\chi_f= \chi (\cal{O}_S)- (b-1)(g-1)$,
\item the relative topological Euler characteristic \[e_f=e(S) - e(F)\,e(B)=\sum_{b \in B}( e(F_b)-2+2g)\] where $F$ is a general fiber of $f$ and
$e(X)$ denotes the topological Euler number of the space $X$.
\end{enumerate}

The three invariants are related by Noether's Formula:
\begin{equation}
K_f^2+e_f=12 \chi_f,
\end{equation}
thus only two of them are independent. \\

In this paper we study the geography of a fibred surface with fibres of genus 5. Our main result Theorem \ref{enunciato} gives a linear relation between the fundamental invariants of the surface and the number of the trigonal fibres.\\

The techniques we use are based on the analysis of the \textit{relative canonical algebra} of the fibration $f$

\begin{equation}
\cal{R}(f)=\bigoplus_{n \geq 0} \cal{R}_n
\end{equation}
where
\begin{equation}
\cal{R}_n=f_* \omega_{S|B}^{\otimes n}.
\end{equation}

Similar ideas have been applied successfully to the study of surface fibrations of genus 2 and 3. In her PhD thesis Mendes Lopes completed the local analysis of the canonical algebra of curves of genus 2 and 3 (see \cite{ML}). Based on this, works of Reid (\cite{Reid1}), Ashikaga and Konno (\cite{AK}) and Catanese and Pignatelli (\cite{CP}) led to a proof of the following statement:

\begin{teo}[Horikawa, Gang, Reid]
Let $f\!: S \fr B$ a surface fibration with fibres of genus 2. Then
\begin{equation}
K_f^2
= 2\chi_f  +\sum_{P \in B}
H(P)
\end{equation}
where the Horikawa number of a genus 2 fibre germ over $P$
\[H(P)=H(P)=\operatorname{lenght} (\operatorname{coker}(\text{\emph{Sym}}^2\cal{R}_1 \fr \cal{R}_2)\emph{)}\]
can be interpretated (roughly speaking) as the virtual number of  $\,2$-disconnected fibres of type $E_1 + E_2$'' (with
$E1$, $E2$ elliptic curves meeting transversally in one point).\\

Let $f\!: S \fr B$ a surface fibration with fibres of genus 3. Then
\begin{equation}
K_f^2
= 3\chi_f  +\sum_{P \in B}
H(P)
\end{equation}
where the Horikawa number $H(P)$ is defined as \[H(P)=\operatorname{lenght} (\operatorname{coker}(\text{\emph{Sym}}^2\cal{R}_1 \fr \cal{R}_2)\emph{)}.\]
\end{teo}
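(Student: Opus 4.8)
The plan is to read the relation off the degrees of the first two graded pieces $\cal{R}_1$ and $\cal{R}_2$ of the relative canonical algebra, by comparing $\cal{R}_2$ with $\sym^2\cal{R}_1$ through the multiplication map. First I would record that $\cal{R}_1=f_*\omega_{S|B}$ and $\cal{R}_2=f_*\omega_{S|B}^{\otimes 2}$ are locally free on $B$: for $\cal{R}_1$ this follows from Ramanujan's Lemma, while for $\cal{R}_2$ one uses that $\omega_F^{\otimes 2}$ has degree $2(2g-2)>2g-2$, so that $R^1f_*\omega_{S|B}^{\otimes 2}=0$ and Grauert's theorem gives local freeness with fibres $H^0(\omega_F^{\otimes 2})$. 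Their ranks are $g$ and $3(g-1)$ respectively.

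The numerical input is the value of $\deg\cal{R}_2$. I would apply Riemann--Roch on the surface to the line bundle $2K_f$, writing $K_S=K_f+f^*K_B$ and using the intersection numbers $K_f\cdot F=2g-2$ and $\deg K_B=2b-2$, which gives $\chi(\omega_{S|B}^{\otimes 2})=\chi(\cal{O}_S)+K_f^2-2(b-1)(2g-2)$. Since $R^1f_*\omega_{S|B}^{\otimes 2}=0$, the Leray spectral sequence reduces this to $\chi(B,\cal{R}_2)$, and Riemann--Roch on $B$ together with $\chi_f=\chi(\cal{O}_S)-(b-1)(g-1)$ yields
\begin{equation}
\deg\cal{R}_2=\chi_f+K_f^2 .
\end{equation}
The same bookkeeping with exponent $1$ recovers the standard identity $\deg\cal{R}_1=\chi_f$.

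Next I would study the multiplication map $m\!:\sym^2\cal{R}_1\fr\cal{R}_2$. On a general fibre $F$ it restricts to $\sym^2H^0(\omega_F)\fr H^0(\omega_F^{\otimes 2})$; in genus $2$ this is an isomorphism of $3$-dimensional spaces, and in genus $3$ (non-hyperelliptic general fibre) Max Noether's theorem makes it a surjection of $6$-dimensional spaces, hence again an isomorphism. Thus $m$ is a morphism of vector bundles of equal rank which is generically an isomorphism, so it is injective with torsion cokernel and $\operatorname{length}(\operatorname{coker}m)=\sum_{P}H(P)$. Taking first Chern classes in the exact sequence $0\fr\sym^2\cal{R}_1\fr\cal{R}_2\fr\operatorname{coker}m\fr 0$, and using $c_1(\sym^2E)=(g+1)\,c_1(E)$ for a rank-$g$ bundle $E$, gives $\deg\cal{R}_2=(g+1)\chi_f+\sum_PH(P)$. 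Substituting the computed value of $\deg\cal{R}_2$ and specialising to $g=2$ and $g=3$ produces $K_f^2=2\chi_f+\sum H(P)$ and $K_f^2=3\chi_f+\sum H(P)$ respectively.

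The main obstacle is controlling $m$: one must know that the fibrewise multiplication is an isomorphism on the general fibre, so that $\operatorname{coker}m$ is supported at finitely many points and has finite length. In genus $2$ this is automatic, whereas in genus $3$ it requires the general fibre to be non-hyperelliptic, so that the canonical image is a plane quartic and the degree-$2$ part of the canonical ring needs no new generator; the hyperelliptic fibres, being isolated, are precisely the points where $m$ drops rank and contribute to $\sum_PH(P)$. The identification of $H(P)$ with the geometric ``virtual number of disconnected fibres'' is not needed for the numerical relation and is obtained separately from the local analysis of $\cal{R}(f)$ near $P$.
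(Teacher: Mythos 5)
Your proposal is correct, but note that the paper itself contains no proof of this statement: it is quoted as known (due to Horikawa, Mendes Lopes, Reid, Ashikaga--Konno, Catanese--Pignatelli), and only the genus 5 analogue is proved in Section 3. What you have written is the standard argument, and it is precisely the template the paper follows for genus 5: compute $\deg \cal{R}_1=\chi_f$ and $\deg \cal{R}_2 = \chi_f + K_f^2$ from Riemann--Roch plus Leray and base change (the paper's formula (\ref{deg})), then compare $\sym^2 \cal{R}_1$ with $\cal{R}_2$ through the multiplication map and read the slope equality off first Chern classes. The difference between your case and the paper's is structural rather than methodological: for $g=2,3$ the bundles $\sym^2\cal{R}_1$ and $\cal{R}_2$ have equal rank ($3$, resp.\ $6$), so the multiplication map is injective with torsion cokernel and a single exact sequence suffices; for $g=5$ it has a rank-$3$ kernel $\cal{K}_2$, which is why the paper must pass to degree $3$, introduce $\cal{K}_3$, and measure the failure of $\cal{K}_2\otimes\cal{R}_1 \fr \cal{K}_3$ to be an isomorphism at the trigonal fibres. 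Two details in your write-up deserve emphasis: you correctly observe that the genus 3 statement implicitly requires the general fibre to be nonhyperelliptic (otherwise the cokernel has positive rank and no finite length), a hypothesis the quoted statement omits; and your direct verification that for a genus 2 fibre the map $\sym^2 H^0(\omega_F) \fr H^0(\omega_F^{\otimes 2})$ is an isomorphism (injectivity via factorization of binary quadrics, then a dimension count) is the right substitute for Noether's theorem, which is unavailable for hyperelliptic curves.
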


\begin{defi}[\cite{AK}]
Let $f\!: S \fr B$ a surface fibration with a certain condition on its general fibre. If there exists a rational number $\lambda$, a finite set of fibres $F_1, \ldots, F_n$ and well-defined nonnegative rational numbers $\operatorname{Ind}(F)$ satisfying
\[K_f^2= \lambda \chi_f + \sum_{i=1}^n \operatorname{Ind}(F_i)\]
we call the relation a \textit{slope equality}.\\
The fibres with positive index Ind are called the \textit{atoms} of the fibration. 
\end{defi}

\begin{rmk}
The essence of the slope equality is the concentration of the global invariants of the surface on these fibres. In the case of genus 2 and 3 fibrations the index $\operatorname{Ind}(F_P)$ is the Horikawa number $H(P)$, thus the atoms are the fibres with positive Horikawa number.
\end{rmk}

\begin{defi}
An effective divisor $D$ on a smooth algebraic surface
$S$ is said to be \textit{$k$-connected} if, whenever we write $D = A+B$ as a sum
of effective divisors $A, \, B $, we have that $A.B \geq k$.
\end{defi}
\begin{defi}
An effective $2$-connected divisor on an algebraic surface is \textit{trigonal} if it has a $g_3^1$.
\end{defi}
\begin{rmk}
The locus of trigonal curves has codimension 1 in the moduli space of genus 5 stable curves $\overline{\cal{M}}_5$, while the general curve has gonality 4.
\end{rmk}

\begin{teo}\label{enunciato}
Let $S$ be a projective smooth surface and $B$ a projective smooth curve. Let $f\!: S \fr B$ be a
surface fibration of genus $5$ and let us suppose that every fibre is $3$-connected and nonhyperelliptic. Let us also suppose that the general fibre is nontrigonal.

Then 

\begin{equation}\label{teo}
K_f^2=4 \chi_f + N
\end{equation}
where $N$ is the number of the trigonal fibres.

\end{teo}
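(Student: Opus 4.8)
The plan is to analyze the relative canonical algebra $\cal{R}(f)$ fibre by fibre, following the strategy that worked for genus 2 and 3, but adapted to the richer structure of a genus 5 canonical curve. For a general nontrigonal fibre of genus $5$, the canonical embedding lands in $\p^4$ and, by the Enriques--Babbage--Petri theorem, the homogeneous ideal is generated by three quadrics whose $2\times 2$ minors... more precisely the canonical curve is cut out by quadrics, and a nontrigonal, nonhyperelliptic genus $5$ curve is a complete intersection of three quadrics in $\p^4$. This is the key geometric input: it tells me the Betti numbers of $\cal{R}_\bullet$ at a general fibre and lets me predict the ranks of the natural multiplication maps $\sym^n \cal{R}_1 \fr \cal{R}_n$. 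The trigonal fibres are exactly where this complete-intersection structure degenerates.

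First I would compute the ranks of the bundles $\cal{R}_n = f_*\omega_{S|B}^{\otimes n}$ on $B$ using relative Riemann--Roch and the vanishing of higher direct images for $n\geq 2$; in particular $\operatorname{rk}\cal{R}_1 = 5$, $\operatorname{rk}\cal{R}_2 = 12$, and the degrees are controlled by $\chi_f$ and $K_f^2$ through $\deg \cal{R}_n = \binom{n}{2}K_f^2 + \chi_f + \text{(correction)}$ coming from Grothendieck--Riemann--Roch applied to $\omega_{S|B}^{\otimes n}$. Second, I would study the multiplication map $\sym^2 \cal{R}_1 \fr \cal{R}_2$. Since $\sym^2 \cal{R}_1$ has rank $15$ and $\cal{R}_2$ has rank $12$, the kernel has rank $3$ over a general fibre --- these are precisely the three quadrics cutting out the complete intersection. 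The cokernel of this map, suitably interpreted, will be supported exactly on the trigonal fibres, and computing its length is the heart of the argument.

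The main computation is then a comparison of degrees. The idea is to resolve $\omega_{S|B}$ or rather to build the relative canonical algebra as a sheaf of graded algebras on $B$, extract from the complete-intersection structure a Koszul-type relation among the $\cal{R}_n$, and read off a linear relation between $\deg\cal{R}_1$, $\deg\cal{R}_2$, and the degeneracy loci. Concretely, I would express $K_f^2$ and $\chi_f$ in terms of $\deg\cal{R}_1$ and $\deg\cal{R}_2$ via Riemann--Roch on the fibration (using $c_1(\cal{R}_1)$ and noting $\deg\cal{R}_1 = \chi_f$), then use the exact sequence built from the multiplication map together with the three-quadric relation to pin down the discrepancy. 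The correction term, by construction, is a sum of local contributions concentrated at the trigonal fibres, and the claim $K_f^2 = 4\chi_f + N$ amounts to showing that each trigonal fibre contributes exactly $1$ to this discrepancy --- i.e. the local index $\operatorname{Ind}(F) = 1$ for a trigonal fibre.

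The hard part will be the local analysis at the trigonal and other special fibres: establishing that the degeneracy locus of $\sym^2\cal{R}_1 \fr \cal{R}_2$ (or of the associated relation bundle) is reduced and supported precisely on trigonal fibres with multiplicity one, rather than contributing some larger or fractional index. This requires understanding the local structure of the relative canonical algebra at a $3$-connected fibre that acquires a $g^1_3$, which is where the hypotheses that every fibre is $3$-connected and nonhyperelliptic become essential --- they guarantee the fibres stay canonically embedded and exclude the hyperelliptic and less-connected degenerations that would spoil the clean count. I expect that bounding this local index from above and below, and checking it equals exactly $1$ on a trigonal fibre, will demand an explicit normal-form analysis of the graded algebra $\cal{R}(f)$ near such a fibre, analogous to Mendes Lopes's local study in the genus $2$ and $3$ cases.
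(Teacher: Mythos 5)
Your general framework (the algebra $\cal{R}(f)$, the rank and degree formulae for $\cal{R}_n$, the complete-intersection description of nontrigonal canonical genus $5$ curves) agrees with the paper, but your central mechanism fails: the cokernel of $\sym^2\cal{R}_1 \fr \cal{R}_2$ is identically zero, not a skyscraper supported on the trigonal fibres. Under the hypothesis that every fibre is $3$-connected and nonhyperelliptic, every fibre is canonically embedded and Max Noether's theorem holds for it (Lemma \ref{lemma}), so the map on fibres $\sym^2 H^0(F_b,\omega_{F_b}) \fr H^0(F_b,\omega_{F_b}^{\otimes 2})$ is surjective at \emph{every} $b \in B$, trigonal or not. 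Equivalently, $h^0(\mathbb{P}^4, I_{F_b}(2)) = 15 - 12 = 3$ for every fibre: a trigonal canonical curve of genus $5$ lies on exactly as many quadrics as a nontrigonal one (for the trigonal curve the three quadrics cut out the scroll $\mathbb{F}(1,2)$ rather than the curve itself). Hence the degeneracy locus you propose to measure is empty, the kernel $\cal{K}_2$ is a rank $3$ subbundle everywhere, and the Horikawa-number recipe that works in genus $2$ and $3$ detects nothing in degree $2$ here; followed literally, your computation would output $K_f^2 = 4\chi_f$ with no correction term.

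The point of the paper is that trigonality is detected one degree higher, where Petri's theorem distinguishes the two cases: for a nontrigonal fibre the ideal $I_{F_b}$ is generated by the three quadrics, while for a trigonal fibre two extra cubic generators are needed. Setting $\cal{K}_n = \ker(\sym^n\cal{R}_1 \fr \cal{R}_n)$ (of ranks $3$ and $15$ for $n=2,3$), the paper studies the multiplication map $\mu\!: \cal{K}_2 \otimes \cal{R}_1 \fr \cal{K}_3$, which on fibres is
\begin{equation*}
H^0(\mathbb{P}^4, I_{F_b}(2)) \otimes H^0(\mathbb{P}^4, \cal{O}(1)) \fr H^0(\mathbb{P}^4, I_{F_b}(3));
\end{equation*}
this is an isomorphism at nontrigonal fibres (both sides have dimension $15$) and has $2$-dimensional cokernel at trigonal ones. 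The resulting cokernel sheaf $\cal{F}$ in (\ref{quad e cubiche}) is a skyscraper with $\chi(\cal{F}) = 2N$, and the theorem follows from a purely global Euler-characteristic computation using the diagram (\ref{diagramma}) and the degrees of $\sym^2\cal{R}_1\otimes\cal{R}_1$, $\sym^3\cal{R}_1$, $\cal{R}_2\otimes\cal{R}_1$, $\cal{R}_3$, which unwinds to $8\chi_f - 2K_f^2 + 2N = 0$. Note also that the ``hard part'' you anticipate --- a local normal-form analysis \`a la Mendes Lopes to pin the multiplicity of each trigonal fibre at $1$ --- does not appear in the paper at all: the contribution of each trigonal fibre is fixed at $2$ (the two Petri cubics) by the fibrewise statement of Petri's theorem alone, with no local study of the degeneration.
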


\begin{rmk}This theorem proves a slope equality for genus 5 fibrations with maximal gonality: there is a linear relation between $K_f^2$ and $\chi_f$ with a correction term which takes into account the existence of the atoms which, in our case, are the trigonal fibres.
\end{rmk}
\begin{rmk}In particular, the fibrations we are dealing with will be always relatively minimal, since the hypothesis that the fibres are 3 connected implies that $K_S$ is relatively ample (see Lemma \ref{lemma}); in particular the fibres of $f$ do not contain $-1$-curves or $-2$-curves.\end{rmk}
\begin{rmk} We require that every fibre is 3-connected and nonhyperelliptic. This hypothesis is important for the techniques we use, because it implies the existence of the canonical embedding of every fibre. However most singular stable curves are not even 2-connected. On the other hand our fibres can have arbitrary singularities.

It would be interesting to understand what happens if we drop this assumptions, at least for stable fibrations. We expect that the study of the trigonal divisor in $\overline{\cal{M}}_5$ will lead to a generalization of Theorem \ref{enunciato}.
\end{rmk}

Under the assumptions of our theorem $S$ is a relative canonical model (it is smooth and $K_S$ is relatively ample), so \cite{Reid2}
\[S= \text{Proj}_B (\cal{R}(f))\] and \[S \subset \mathbb{P}= \text{Proj}_B (\bigoplus_{n \geq 0} \text{Sym}^n f_* \omega_{S|B}).\]

$\mathbb{P}$ is a $\mathbb{P}^4$-bundle over $B$ and fibrewise the embedding $S \subseteq \mathbb{P}$ restricts to the canonical embedding of the curve. $S$ has codimension 3 in $\mathbb{P}$ and so the results in \cite{BE} imply that, at least locally, the equations are in the Pfaffian form. In Section \ref{Examples} we use this fact to produce examples of regular surfaces satisfying the assumptions of Theorem \ref{enunciato}, for every possible value of $p_g \geq 0$.\\

\textbf{Acknowledgements.} I would like to thank my supervisor Miles Reid for introducing me to this beautiful topic and for all the help he generously gave me. I am grateful to the geometry groups at Warwick University and Pavia for their help and support. Thanks to R. Pignatelli and L. Stoppino for many useful suggestions.

\section{The relative canonical algebra of a fibration}
Let $K_f$ be the
relative canonical divisor $K_S-f^*K_B$, $\omega_{S|B}$ the corresponding sheaf and set
\[\cal{R}_n=f_*(\omega_{S|B}^{\otimes n}) \qquad \text{for} \; n \geq 0.\]

Since $F^2=0$ for any fibre $F$ we see that \[K_F=(K_S+F)_{|F}=(K_f)_{|F}.\]
By Serre duality $h^0(F, \cal{O}_F)=h^1(F, \omega_F)=1$, while $\chi(\omega_F)$ is constant by flatness. Thus $h^0(F, \omega_F)$ is constant and we call this common value $g$. In addition $h^0(F, \omega_F^{\otimes n})=(2n-1)(g-1)$ for $n>1$.

The fibres of $\cal{R}_n$
\[\cal{R}_n \otimes k(b)=H^0(F_b,{(\omega_{S|B}^{\otimes n}})_{|F_b} )=H^0(F, \omega_F^{\otimes n})\]
have constant dimension, so by base change (see \cite{Mum}, Corollary II.2), $\cal{R}_n$ is a locally free sheaf. The rank of $\cal{R}_n$ is given by

\begin{equation}\label{rank}\text{rank} \cal{R}_n=\left\{ \begin{array}{ll}
1 & n=0\\
g & n=1\\
(2n-1)(g-1) & n >1\\
\end{array}\right.\end{equation}

\begin{defi}The \textit{relative canonical algebra} is defined as
\[\cal{R}(S/B)=\cal{R}(f)=\bigoplus_{n \geq 0} \cal{R}_n\]
with multiplication induced by the tensor product $\omega_{S|B}^{\otimes n} \otimes \omega_{S|B}^{\otimes m}
\fr \omega_{S|B}^{\otimes n+m}$. It is a finitely generated $\cal{O}_B$-algebra, generated in degree
$\leq 4$ (the 1-2-3 Theorem, see \cite{K}).\end{defi}

We can easily calculate $\chi(\cal{R}_n)$. By the Leray spectral sequence \[H^n(S, \cal{F})=\bigoplus_{p+q=n}H^p(B, R^qf_*\cal{F})\] for any coherent sheaf $\cal{F}$ on
$S$. Again by base change we find that \[R^1f_*\omega_{S|B}^{\otimes n}=0 \text{ for all }n>1\] and
\[R^2f_*\omega_{S|B}^{\otimes n}=0 \text{ for all }n \geq 1.\] Moreover Grothendieck duality implies that \[R^1f_* \omega_{S|B}=\fs_B.\]
Putting these facts together we find

 \begin{equation} \left\{ \begin{array}{ll}

\chi(\cal{R}_1)=\chi(\omega_{S|B})-\chi(\fs_B)&\\
\chi(\cal{R}_n)=\chi(\omega_{S|B}^{\otimes n})& \text{for all  } \, n >1.

\end{array}\right.\end{equation}

Using the Riemann-Roch formula for surfaces and curves, and defining
\[\chi_f=\chi(\cal{O}_S)-(b-1)(g-1)\] we finally compute

\begin{eqnarray} \label{chi}
\chi(\cal{R}_n) &=&\left\{ \begin{array}{ll}
b-1 & n=0,\\
\chi_f+g(1-b)& n=1,\\
\chi_f + {n \choose 2} K_f^2+(2n-1)(g-1)(1-b)& n >1;\\
\end{array}\right.\\[6pt]\label{deg}
 \deg(\cal{R}_n) &=&\left\{ \begin{array}{ll}
0 & n=0,\\
\chi_f& n=1,\\
\chi_f + {n \choose 2} K_f^2& n >1.\\
\end{array}\right. \end{eqnarray}

\section{Fibrations of genus 5}
In this section we present the proof of Theorem \ref{enunciato}. The following Lemma will clarify the nature of our assumptions.

\begin{lemma}\label{lemma}
Let $C$ be an effective divisor on a smooth surface. Let us suppose that $C$ is $3$-connected and nonhyperelliptic.\\

Then the dualizing sheaf $\omega_C=(\omega_S\otimes \cal{O}_S(C))_{|C}$ is very ample. Moreover Noether's Theorem holds, i.e.
\[\emph{Sym}^n H^0(C, \omega_C) \fr H^0(C, \omega_C^{\otimes n})\]
is surjective for any $n \geq 1$.\\

Also Petri's Theorem holds. In the case of a genus 5 divisor it says that the ideal of its canonical embedding in $\mathbb{P}^4$ is defined by three quadrics if the curve is nontrigonal, and by three quadrics and two cubics if it is trigonal.
\end{lemma}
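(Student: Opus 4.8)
The plan is to prove the three assertions in turn, recognizing them as the extensions to $k$-connected (possibly singular or non-reduced) divisors of the classical chain Noether--Enriques--Petri for smooth canonical curves. Throughout I use that $C$, being an effective divisor on a smooth surface, is Gorenstein, so $\omega_C=(\omega_S \otimes \cal{O}_S(C))_{|C}$ is an honest line bundle of degree $2p_a(C)-2 = 2g-2 = 8$, with $h^0(\omega_C)=g$ and $h^1(\omega_C)=h^0(\cal{O}_C)=1$ (the latter already from connectedness, which is implied by $3$-connectedness). The engine behind every step is a Ramanujam-type vanishing for $k$-connected divisors: whenever $C=A+B$ with $A,B$ effective, $3$-connectedness forces $A\cdot B\ge 3$, and this bounds the cohomology of restrictions such as $H^1(\cal{O}_A(-B))$, which is exactly what is needed to run the classical arguments in the singular setting (as developed for $k$-connected curves by Catanese--Franciosi--Hulek--Reid).

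\textbf{Very ampleness.} I would show that $\omega_C$ separates every length-two subscheme $Z\subset C$, thereby separating both distinct point pairs and tangent vectors. From the sequence
\[0 \fr \omega_C\otimes\cal{I}_Z \fr \omega_C \fr \omega_C\otimes\cal{O}_Z \fr 0\]
and Serre duality on the Gorenstein curve $C$, failure of the surjection $H^0(\omega_C)\fr H^0(\omega_C\otimes\cal{O}_Z)$ forces $h^0(\omega_C(-Z))=g-1$ and hence, by Riemann--Roch, $h^0(\cal{O}_C(Z))=2$, i.e. a moving pencil of degree $2$. I would then translate this datum back to the surface $S$: either $Z$ produces a decomposition $C=A+B$ with $A\cdot B\le 2$, which $3$-connectedness excludes, or it yields a genuine base-point-free $g^1_2$, which the nonhyperelliptic hypothesis excludes. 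This is the point at which both hypotheses are used in an essential way.

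\textbf{Noether's theorem.} Granting very ampleness, I would prove surjectivity of $\operatorname{Sym}^n H^0(\omega_C)\fr H^0(\omega_C^{\otimes n})$ by induction on $n$, reducing to the multiplication maps $H^0(\omega_C)\otimes H^0(\omega_C^{\otimes n-1})\fr H^0(\omega_C^{\otimes n})$. For $n\ge 4$ the base-point-free pencil trick applied to a pencil in $|\omega_C|$ gives surjectivity at once, since the obstruction $H^1(\omega_C^{\otimes n-2})$ vanishes as soon as $\deg\omega_C^{\otimes n-2}>2g-2$. The genuine content is the quadratic and cubic cases $n=2,3$, where the obstruction group $H^1(\omega_C^{\otimes n-2})$ is nonzero; these are the classical theorems of Max Noether, which I would obtain by running the pencil trick over the whole family of base-point-free pencils in $|\omega_C|$ together with the uniform position principle, nonhyperellipticity being precisely what makes those pencils available. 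The dimension counts from the excerpt, $h^0(\omega_C^{\otimes 2})=3g-3=12$ against $\dim\operatorname{Sym}^2 H^0(\omega_C)=\binom{6}{2}=15$, record the expected ranks and pin down the kernel sizes.

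\textbf{Petri's theorem and the main obstacle.} From projective normality the homogeneous ideal $I_C\subset k[x_0,\dots,x_4]$ is determined by its graded pieces, and the quadric count $\dim\operatorname{Sym}^2 H^0(\omega_C)-h^0(\omega_C^{\otimes 2})=15-12=3$ gives exactly three independent quadrics. If $C$ is nontrigonal these three quadrics already cut out $C$ scheme-theoretically: their common zero locus has degree $2^3=8=2g-2$ and dimension $1$, so $C$ is their complete intersection and no further generators occur, since the degree-$3$ piece is spanned by linear multiples of the quadrics ($3\cdot 5=15=\dim\operatorname{Sym}^3 H^0(\omega_C)-h^0(\omega_C^{\otimes 3})=35-20$). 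If $C$ is trigonal the three quadrics instead cut out the $2$-dimensional cubic scroll swept by the $g^1_3$, whose three defining quadrics carry two linear syzygies; the linear multiples then produce only $15-2=13$ cubics against the $15$ cubics of $I_C$, forcing exactly two extra cubic generators. The main obstacle throughout is that Petri's analysis is classically written for smooth irreducible curves, so its Enriques--Babbage dichotomy (trigonal or plane quintic versus complete intersection of quadrics) must be re-established for a $3$-connected Gorenstein curve; since a plane quintic has genus $6$, that case does not arise for $g=5$, which is exactly what makes the clean statement above possible.
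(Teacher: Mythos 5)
Your proposal is correct and takes essentially the same approach as the paper: the paper's own proof simply cites \cite{CFHR} for the criterion that $\omega_C$ is very ample if and only if $C$ is $3$-connected and not honestly hyperelliptic, and then asserts that, once the canonical embedding exists, the proofs of Noether's and Petri's theorems in \cite{ACGH} go through without change --- and those classical arguments (base-point-free pencil trick, quadric counts, Enriques--Babbage dichotomy with the plane quintic case excluded because it has genus $6$) are exactly what you sketch. The one genuinely delicate step, re-establishing the uniform-position/Enriques--Babbage analysis for singular, possibly reducible Gorenstein curves, is handled by assertion both in your write-up and in the paper, so this is not a point of divergence.
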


\begin{proof}
It is proved in \cite{CFHR} that, given an effective divisor $C$ on a smooth algebraic surface, the dualizing sheaf $\omega_C$ is very ample if and only if the curve is $3$-connected and not honestly hyperelliptic (which means that there is a double cover of $\mathbb{P}^1$ induced by the canonical morphism).

Even if the fibre is singular, the existence of the canonical
embedding is enough to prove that Noether's Theorem and Petri's Theorem are still valid, for instance the proof in \cite{ACGH} go through without change. \end{proof}

Consider the map \[\phi_n\!: \text{Sym}^n(\cal{R}_1) \fr \cal{R}_n\] Looking
at the stalk in a point $b \in B$, the map $\phi_{n}$ is just

\[\sym^n H^0(F, \omega_F) \fr H^0(F, \omega_F^n).\]
By Noether's Theorem this map of stalks is surjective, thus $\phi_n$ is surjective.

We are interested the locally free sheaves $\cal{K}_n= \text{ker}( \phi_n)$. For each $b \in
B$, the fibre $\cal{K}_n \otimes k(b)$ is the vector space of polynomials of degree $n$ in $\mathbb{P}^4$
vanishing on the curve $F_b$:
\begin{equation}
\cal{K}_n \otimes k(b)= H^0(\mathbb{P}^4, I_{F_b}(n)).
\end{equation}
In particular one can see that rank$(\cal{K}_2)=3$ and rank$(\cal{K}_3)=15$.\\

Let us consider the commutative diagram:
\begin{equation}\label{diagramma}\begin{matrix}

0 & \fr & \cal{K}_2\otimes \cal{R}_1 & \fr & \text{Sym}^2 \cal{R}_1 \otimes \cal{R}_1 &
\fr & \cal{R}_2 \otimes \cal{R}_1 & \fr & 0\\
& & \downarrow & & \downarrow & & \downarrow \\
0 & \fr & \cal{K}_3 & \fr & \text{Sym}^3 \cal{R}_1 &
\fr & \cal{R}_3 & \fr & 0\\

\end{matrix}\end{equation}

The map $\mu\!: \cal{K}_2 \otimes \cal{R}_1 \fr \cal{K}_3  $ induces a map of fibres over $b \in B$
\[\mu_b\!: H^0(\mathbb{P}^4, I_{F_b} (2))\otimes H^0(\mathbb{P}^4,\cal{O}_{\mathbb{P}^4}(1))  \fr H^0(\mathbb{P}^4, I_{F_b} (3)).  \]
We can apply Lemma \ref{lemma} and see that for any point $b$ supporting a nontrigonal fibre, $\mu_b$ is surjective, since the ideal $I_{F_b}$ is generated by quadrics. In particular $\mu_b$ is an isomorphism since the two vector spaces both have dimension 15 over the base field $k$. Conversely, over a point supporting a trigonal fibre, the cokernel of $\mu_b$ is a 2 dimensional vector space.

The kernel of the map of sheaves $\mu$ is trivial, while the cokernel is a skyscraper sheaf $\cal{F}$:

\begin{equation}\label{quad e cubiche}
0 \fr \cal{K}_2 \otimes \cal{R}_1 \fr \cal{K}_3 \fr \cal{F} \fr 0.
\end{equation}

The stalk of $\cal{F}$ is a two dimensional vector space on the points supporting a trigonal fibre, while it is trivial elsewhere. It is then clear that \begin{equation}\label{2N}\chi(\cal{F})=2N\end{equation} where $N$ is the number of the trigonal fibres.

\vspace{0,5cm}

\textit{Proof of Theorem \ref{enunciato}}\\

Taking into account the formulae (\ref{chi}) and (\ref{deg}) it is possible to calculate the Euler characteristic of $\text{Sym}^2(\cal{R}_1)\otimes \cal{R}_1$ and $\text{Sym}^3(\cal{R}_1)$ using the splitting principle:
\begin{eqnarray}
\deg (\text{Sym}^2(\cal{R}_1))&=& 6 \chi_f, \\
\deg (\text{Sym}^3(\cal{R}_1))&=& 21 \chi_f. \nonumber
\end{eqnarray}

Using standard results about the Chern classes of a tensor product, one can calculate that
\begin{eqnarray}
\deg (\text{Sym}^2(\cal{R}_1)\otimes \cal{R}_1)&=& 45 \chi_f \\
\deg (\cal{R}_2\otimes \cal{R}_1)&=& 17 \chi_f+5 K_f^2 \nonumber
\end{eqnarray}
and if we take into account diagram (\ref{diagramma}) we can conclude that
\begin{eqnarray}\label{chi strani}
\chi(\cal{K}_3)&=&20 \chi_f-3 K_f^2 +15 \chi(\cal{O}_B)\\
\chi(\cal{K}_2\otimes \cal{R}_1)&=&28 \chi_f-5 K_f^2 +15 \chi(\cal{O}_B).\nonumber
\end{eqnarray}

Consider now the exact sequence (\ref{quad e cubiche}). By additivity of the Euler characteristic and substituting the equalities (\ref{2N}) and (\ref{chi strani}) we find
\begin{eqnarray*} 0 &= &\chi (\cal{R}_2 \otimes \cal{R}_1)- \chi (\cal{R}_3)+ \chi (\cal{F})\\
&=& (28 \chi_f-5 K_f^2 +15 \chi(\cal{O}_B))-(20 \chi_f-3 K_f^2 +15 \chi(\cal{O}_B))+2N\\
&=& 8 \chi_f-2 K_f^2 + 2N.
\end{eqnarray*}
This is precisely the statement of our Theorem.
\begin{flushright} $\blacksquare$ \end{flushright}

\section{Examples}\label{Examples}

In this Section we exhibit examples of fibrations satisfying the assumptions of Theorem \ref{enunciato}.
In particular we are looking for fibrations $f\!: S \fr \p$ with $S$ smooth projective surface and a single trigonal fibre.

\begin{es}\label{ex} The easiest ambient space $\mathbb{P}$ we can work with is $\p_{t_0:t_1}\times \mathbb{P}^4_{x_0:\,\ldots\, :x_4}$. $S$ has codimension 3 in $
 \p \times \mathbb{P}^4$ and, bearing \cite{BE} in mind, we expect the equations of $S$ to have Pfaffian form.\end{es}

By Petri's Theorem, the generic fibre is a complete intersection of three quadratic polynomials in $\mathbb{P}^4$, while for the trigonal fibres we need 2 more cubic polynomials. These must be in the ideal generated by the quadric
polynomials for every other fibre.

Consider the Pfaffian equations of the following skew
symmetric matrix (we are writing only the upper triangular part, since al the other entries are
determined by skew-symmetry):

\begin{equation}
M=\left( \begin{matrix}
  & t_1 & x_0 & x_2 & x_3\\
 & & t_0x_1 & t_0x_3 & t_0x_4\\
 & & & q_1 & q_2\\
 & & & & q_3
\end{matrix} \right)
\end{equation}
 $q_1,\,q_2,\,q_3$ are generic quadratic homogeneous polynomials only in the $x$ variables. For those unfamiliar with the Pfaffian notation, the Pfaffian equations of the matrix $M$ are:

\begin{equation} \label{pfaff}\left \{\begin{array}{lll}

c_1&=& t_0(x_1q_3-x_3q_2+x_4q_1)\\
c_2&=&x_0q_3-x_2q_2+x_3q_1\\
p_1&=&t_1q_3+t_0( - x_2x_4+x_3^2)\\
p_2&=&t_1q_2+t_0(-x_0x_4+x_1x_3)\\
p_3&=&t_1q_1+t_0(-x_0x_3+x_1x_2)

\end{array}\right.\end{equation}

Over each $(t_0, t_1) \neq (1,0)$ the two cubic polynomials $c_1$ and $c_2$ in (\ref{pfaff}) are linear combinations of
the three quadric polynomials:
\begin{eqnarray*}
t_1 c_1&=& t_0(x_1 p_1 - x_2 p_2+x_4 p_3)\\
t_1 c_2&=& x_0 p_1 - x_2 p_2+x_3 p_3.
\end{eqnarray*}

Over $(t_0, t_1) = (1,0)$ we impose a trigonal fibre. But we know
that a nonsingular trigonal curve of genus 5 in $\mathbb{P}^4$ is the intersection of three cubic polynomials
and a rational normal scroll $\mathbb{F}(1,2)$ (see \cite{Reid1}). The equations of $\mathbb{F}(1,2)\subseteq \mathbb{P}^4$ are
\begin{equation*}\text{rank} \left( \begin{matrix}
 x_0 & x_2 & x_3\\
 x_1 & x_3 & x_4\\
\end{matrix} \right)\leq 1\end{equation*}
These equations coincide with $p_1, \, p_2,\, p_3$ when $t_1=0$. One can check that by \cite{Reid1} the equations $c_1$ and $c_2$ cut a trigonal curve inside the scroll $\mathbb{F}(1,2)$.\\

The only issue now is to choose $q_1, q_2, q_3$ such that $S$ is a nonsingular surface. In order to do this we apply
Bertini's Theorem over the subset $(t_1 \neq 0) \subset \p \times \mathbb{P}^4$ and conclude that
the three quadric polynomials are general enough to have nonsingular intersection. Thus $S$ is smooth away from the trigonal fibre. Then we apply Bertini's Theorem again over $t_1=0$ to conclude that the trigonal fibre is smooth for a generic triple $q_1,\, q_2,\, q_3$. Thus for the generic triple $q_1,\, q_2,\, q_3$ $S$ is a nonsingular surface.\\

In this situation the projection $f\!: S \fr \p$ is a flat morphism, because it is a surjective
morphism between a smooth surface and a curve. Moreover, every fibre is connected since $h^0(F,
\cal{O}_F)$ is constant and we have chosen the fibre over $t_1=0$ to be a smooth connected curve. So every
fibre is a canonical genus 5 connected curve, thus we have no hyperelliptic fibres. They must be all
3 connected. In fact being 3 connected and nonhyperelliptic is equivalent to the existence of the canonical
embedding, and all the fibres are canonically embedded (see \cite{CFHR}).\\

Let us compute the invariants of the surface $S$. For this we need the free resolution of $S$ inside $\p \times \mathbb{P}^4$.

One can prove this is given by
\begin{equation}\label{risoluzione}
\begin{matrix}
0 \leftarrow \cal{O}_S \leftarrow \cal{O}_{\p \times \mathbb{P}^4} \\ \stackrel{\text{Pf}
\,M}{\longleftarrow} \cal{O}_{\p \times \mathbb{P}^4}(-1,-3)\oplus \cal{O}_{\p \times
\mathbb{P}^4}(0,-3) \oplus 3\cal{O}_{\p \times \mathbb{P}^4}(-1,-2)\\
 \stackrel{M} {\leftarrow} \cal{O}_{\p \times \mathbb{P}^4}(-1,-3)\oplus \cal{O}_{\p \times
\mathbb{P}^4}(-2,-3) \oplus 3\cal{O}_{\p \times \mathbb{P}^4}(-1,-4) \\ \stackrel{\text{Pf}
\,M}{\longleftarrow}\cal{O}_{\p \times \mathbb{P}^4}(-2,-6)\leftarrow 0\end{matrix}\end{equation}

Exactness can be checked on the fibres, thus one only needs to work out the free resolution of the canonical image of a nonhyperelliptic genus 5 curve. But this is straightforward both in the nontrigonal and in the trigonal case.\\

The dualising sheaf $\omega_S$ can be computed by dualising the free resolution of $\cal{O}_S$ (see \cite{Hart}, Proposition III.7.5). We conclude that
$\omega_S=\cal{O}_S(0,1)$ and that there is an isomorphism  $H^0(\p \times \mathbb{P}^4,
\cal{O}(0,1))\stackrel{\sim}{\fr}H^0(S,\omega_S)$.

In particular we see that $p_g=5$.\\

We can prove as well that the surface is regular. To prove this one considers again the resolution (\ref{risoluzione}) and check that most of the cohomology groups involved vanish.

The upshot of this is that $\chi_f=1+p_g+4=10$.\\

It remains to compute $K_f^2$.

The relative canonical sheaf $\omega_{S|\p}=\omega_S\otimes f^*\cal{O}(2)=\cal{O}_S(2,1)$ is very
ample so for $n$ big enough, $H^i(S, \omega_{S|B}^{\otimes n})=0$ $\forall\, i >0$.
Once again Leray spectral sequences give $H^i(S, \omega_S^{\otimes n})=H^i(\p,
\cal{R}_n)$ $\forall\, i, \; \forall\, n>1$.

Thus for $n$ big enough we get $\chi(\cal{R}_n)= h^0(S, \cal{O}_S(2n,n))$. We can calculate
the latter tensoring the resolution (\ref{risoluzione}) with $\cal{O}(2n,n)$, obtaining a new exact
sequence, which yields
\[\chi(\cal{R}_n)=6-\frac{25n}{2}+\frac{41n^2}{2}\]
at least for big $n$.
We know that ${n\choose2} K_f^2=\chi({R}_n)-\chi_f-4(2n-1)=\frac{41}{2} n(n-1)$ (equation (\ref{chi})), and it
immediately follows that $K_f^2=41$. This is exactly the thesis of Theorem \ref{enunciato}. \\

\begin{rmk}The surface $S$ can be seen as a complete intersection in
$\mathbb{P}^4$:
\begin{equation*}
\begin{matrix}
S & \fr & \mathbb{P}^4\\
(t_0,t_1),(x_0, \ldots, x_4) & \mapsto & (x_0, \ldots, x_4)
\end{matrix}
\end{equation*}
Its image obviously lies in the surface \[\tilde{S}=\{x_0 q_3 - x_2 q_2+x_3q_1=0, x_1 q_3 - x_3
q_2+x_4q_1=0\}.\] The morphism $S \fr \tilde{S}$ is in fact an isomorphism, as it has an inverse. This
can be shown by calculation for $q_1, \,q_2, \, q_3$ general enough.\end{rmk}

\begin{es}
The previous example can be adapted to a more general ambient space, namely a normal rational scroll
$\mathbb{F}(a_0,a_1,a_2,a_3,a_4)$. In particular we can easily find examples for any odd $p_g \geq 5$.\end{es}

Let us fix $\mathbb{F}=\mathbb{F}(a,a,0,0,0)$ for any $a \geq 0$. We can consider the Pfaffian
equations of a matrix very similar to the one of the previous example:

\begin{equation}
M=\left( \begin{matrix}
  & t_1 & t_0^a x_0 & x_2 & x_3\\
 & & t_0^{a+1} x_1 & t_0x_3 & t_0x_4\\
 & & & q_1 & q_2\\
 & & & & q_3
\end{matrix} \right)
\end{equation}
with Pfaffian equations

\begin{equation} \left \{\begin{array}{l}

c_1=t_0(t_0^a x_1q_3-x_3q_2+x_4q_1)\\
c_2=t_0^a x_0q_3-x_2q_2+x_3q_1\\
p_1=t_1q_3+t_0( - x_2x_4+x_3^2)\\
p_2=t_1q_2+t_0^{a+1}(x_0x_4-x_1x_3)\\
p_3=t_1q_1+t_0^{a+1}(x_0x_3-x_1x_2)

\end{array}\right.\end{equation}
where the $q_i$ have bidegree $(a,2)$. Like in Example \ref{ex}, we can find $q_i$ general enough to yield a
smooth surface with one single trigonal fibre.

We can show again that the surface is regular and that $\omega_S=\cal{O}_S(0,1)$ and
$H^0(\mathbb{F}, \cal{O}_{\mathbb{F}}(0,1)) \stackrel{\sim}{\fr}H^0(S,\omega_S)$. So $p_g=2a+5$.

\begin{es}We can modify the latter example in order to obtain even $p_g\geq 6$. If $p_g$ is even and the surface is regular, the degree of $\cal{R}_1$ is odd, so we look for an ambient space $\text{Proj(Sym}^n \cal{R}_1)=\mathbb{F}(a_0,a_1,a_2,a_3,a_4)$ with $\sum_i a_i$ odd. \end{es}

Let us fix $\mathbb{F}=\mathbb{F}(a,0,0,0,0)$ for any $a$ positive odd integer of the form $2d-1$. The matrix involved is

\begin{equation}
M=\left( \begin{matrix}

  & t_1^{d+1} & t_0^{2d} x_0 & x_2 & x_3\\
 & & t_0^{d+1} x_1 & t_0^dx_3 & t_0^dx_4\\
 & & & q_1 & q_2\\
 & & & & q_3
\end{matrix} \right)
\end{equation}
with Pfaffian equations

\begin{equation} \left \{\begin{array}{l}

c_1=t_0^d(t_0 x_1q_3-x_3q_2+x_4q_1)\\
c_2=t_0^{2d} x_0q_3-x_2q_2+x_3q_1\\
p_1=t_1^{d+1}q_3+t_0^d( - x_2x_4+x_3^2)\\
p_2=t_1^{d+1}q_2+t_0^{d+1}(t_0^{2d-1}x_0x_4-x_1x_3)\\
p_3=t_1^{d+1}q_1+t_0^{d+1}(t_0^{2d-1}x_0x_3-x_1x_2)

\end{array}\right.\end{equation}
where $q_1$ and $q_2$ have bidegree $(0,2)$ and $q_3$ has bidegree $(-1,2)$.

We can show again that the surface is regular and that $\omega_S=\cal{O}_S(0,1)$ and $H^0(\mathbb{F}, \cal{O}_{\mathbb{F}}(0,1)) \stackrel{\sim}{\fr}H^0(S,\omega_S)$. So $p_g=a+5=2d+4$.

\begin{es} The only missing values for $p_g$ are $0,\ldots, 4$. These can be obtained by a suitable modification of the above construction. \end{es}

\vskip.3cm
\noindent Elisa Tenni\\
Dipartimento di Matematica ``F. Casorati''\\
Universit\`a degli Studi di Pavia\\
via Ferrata 1, 27100 Pavia, Italy\\
\textit{e-mail:} elisa.tenni\texttt{@unipv.it}

\end{document}